\documentclass[12pt]{amsart}
\usepackage{amssymb,amscd}
\usepackage{pb-diagram}
\usepackage{graphicx}

\input epsf
\newtheorem{thm}{Theorem}[section]
\newtheorem{lem}[thm]{Lemma}

\newtheorem{prop}[thm]{Proposition}
\newtheorem{rem}[thm]{Remark}

\newtheorem*{theorem*}{Theorem}
\newtheorem*{corollary*}{Corollary}
\newtheorem*{proposition*}{Proposition}
\newtheorem*{theoremA*}{Theorem A}
\newtheorem*{corollaryB*}{Corollary B}
\newtheorem*{propositionC*}{Proposition C}
\newtheorem*{propositionD*}{Proposition D}

\setcounter{tocdepth}{3}
\numberwithin{equation}{section}
\begin{document}

\thanks{
Goldman gratefully acknowledges partiall support by
National Science Foundation grant DMS070781.
Xia gratefully acknowledges partial support by the National Science Council, Taiwan with grants 96-2115-M-006-002 and 97-2115-M-006-001-MY3.}

\title[Action of the Johnson-Torelli group]
{Action of the Johnson-Torelli group on representation varieties}
\subjclass[2000]{57M05, 22D40, 13P10}

\author{William M. Goldman}
\address{
Department of Mathematics,\\
University of Maryland,\\
College Park, MD 20742 \\
\tt{wmg@math.umd.edu} ({\it Goldman}) }

\author{Eugene Z. Xia}
\address{
National Center for Theoretical Sciences\\
Department of Mathematics\\
National Cheng-kung University\\
Tainan 701, Taiwan \\
\tt{ezxia@ncku.edu.tw}
({\it Xia})}

\date{\today}

\newcommand{\B}{{\mathbb B}}
\newcommand{\R}{{\mathbb R}}
\newcommand{\Z}{{\mathbb Z}}
\newcommand{\Q}{{\mathbb Q}}
\newcommand{\C}{{\mathbb C}}

\newcommand{\QQ}{{\mathcal Q}}

\newcommand{\la}{\langle}
\newcommand{\ra}{\rangle}
\renewcommand{\Im}{\mathsf{Im}}
\renewcommand{\Re}{\mathsf{Re}}
\newcommand{\tr}{\mathsf{tr}}
\newcommand{\Hom}{\mathsf{Hom}}
\newcommand{\Ham}{\mathsf{H}}
\newcommand{\Mod}{\mathsf{MCG}}
\newcommand{\Aut}{\mathsf{Aut}}
\newcommand{\Out}{\mathsf{Out}}
\newcommand{\Inn}{\mathsf{Inn}}
\newcommand{\Ker}{\mathsf{Ker}}
\newcommand{\Ad}{\mathsf{Ad}}
\newcommand{\Id}{\mathbb{I}}

\newcommand{\Fm}{{\mathbb F}_m}
\newcommand{\Ft}{{\mathbb F}_3}
\newcommand{\su}{{\mathfrak{su}(2)}}
\newcommand{\SU}{{\mathsf{SU}(2)}}
\newcommand{\slt}{\mathsf{SL}(2,\C)}
\newcommand{\Un}{\o{U}(n)}
\newcommand{\Uo}{\mathsf{U}(1)}

\newcommand{\Mgn}{{\Sigma_{g,n}}}
\newcommand{\MC}{{\Hom_\Cc(\pi,\SU)/\SU}}
\newcommand{\XS}{{\Hom(\pi,\slt)//\slt}}
\renewcommand{\P}{{\mathbb P}}
\newcommand{\bH}{{\mathbb H}}
\newcommand{\M}{{\mathcal M}}
\newcommand{\m}{{\mathfrak M}}
\newcommand{\bM}{{\bar M}}
\newcommand{\Cc}{{\mathcal C}}
\newcommand{\Ff}{{\mathcal F}}
\newcommand{\ff}{{\mathfrak F}}
\newcommand{\f}{\mathsf{t}}
\newcommand{\ttt}{\mathsf{t}}
\newcommand{\F}{\mathsf{F}}
\newcommand{\p}{\mathsf{p}}
\newcommand{\BB}{\mathcal{B}}

\newcommand{\Gg}{\mathfrak{G}}

\newcommand{\Tt}{\mathcal{T}}
\newcommand{\sS}{\mathcal{S}}
\newcommand{\Jj}{\mathcal{J}}
\newcommand{\Xc}{\X_\Cc}
\newcommand{\X}{\mathfrak{X}}
\newcommand{\XcU}{\X^U_\Cc}
\renewcommand{\c}{\mathsf{c}}
\newcommand{\G}{{\mathcal G}}

\newcommand{\hpg}{\Hom(\pi,G)/G}
\newcommand{\hppg}{\Hom(\pi,G)\slashslash G}
\newcommand{\hmg}{\Hom(\pi,G)}
\renewcommand{\k}{{\mathfrak k}}
\renewcommand{\H}{\text{H}}

\begin{abstract}
Let $\Sigma$ be a compact orientable surface with genus $g$ and
$n$ boundary components $B = (B_1,\dots
,B_n)$. Let $c = (c_1,\dots,c_n)\in [-2,2]^n$. Then the
mapping class group $\Mod$ of $\Sigma$ acts on the relative
$\SU$-character variety $\Xc:=\MC$, comprising conjugacy classes of
representations $\rho$ with $\tr(\rho(B_i)) = c_i$. This
action preserves a symplectic structure on the smooth part of $\Xc$,
and the corresponding measure is finite. Suppose $g =1$ and $n = 2$.
Let $\Jj \subset \Mod$ be the subgroup generated by Dehn twists
along null homologous simple loops in $\Sigma$. Then the action of $\Jj$ on
$\Xc$ is ergodic for almost all $c$.
\end{abstract}
\maketitle

\section{Introduction}

Let $\Sigma = \Mgn$ be a compact oriented surface of genus $g$
with $n$ boundary components
$B = \{B_1,...,B_n\}.$
Let $\pi = \pi_1(\Sigma)$ denote its fundamental group.
The {\em mapping class group\/} $\Mod$ consists of
isotopy classes of orientation-preserving homeomorphisms of $\Sigma$
which pointwise fix each $B_i$.
Alternatively, $\Mod$ is the image under the quotient
homomorphism
\begin{equation*}
\Aut(\pi) \longrightarrow \Out(\pi) := \Aut(\pi)/\Inn(\pi)
\end{equation*}
of the subgroup $\Aut(\pi,\BB)$ of all automorphisms of $\pi$ that preserve the set $\BB$ of  conjugacy classes
of the cyclic subgroups $\pi_1(B_i)\subset\pi$ and correspond
to orientation-preserving homeomorphisms.

Let \(G\) be a Lie group.
Then $G$ acts on $\Hom(\pi,G)$ by conjugation.  Let
$$
\X(G) = \Hom(\pi,G)/G.
$$
Let $\Cc = \{C_1, \cdots, C_n\}$, where $C_i \subseteq G$ is a conjugacy class for $1 \le i \le n$.  Then the relative
representation variety is
$$
\Hom_\Cc(\pi, G) = \{\rho \in \Hom(\pi, G): \rho(B_j) \in
C_j, \ \  \mbox{for} \ \  1 \le j \le n \}.
$$
The group $G$ acts on $\Hom_\Cc(\pi,G)$ by conjugation and the
moduli space is the quotient
$$
\Xc(G) = \Hom_\Cc(\pi, G)/G.
$$

The group $\Aut(\pi,\BB)$ acts on $\pi$, preserving $\BB$.
Hence it acts on $\Hom_\Cc(\pi,G)$.
Furthermore the action descends to a $\Mod$-action on $\Xc(G)$. The
moduli space $\Xc(G)$ has an invariant dense open subset $\XcU(G)$ which
is a smooth manifold. This subset has an $\Mod$-invariant
symplectic structure $\omega$, hence, a natural smooth $\Mod$-invariant measure
$\mu$ \cite{Nature, Hu1}.

Denote by $S$ the set of homotopy classes of simple closed curves on $\Sigma$ and by $J \subseteq S$ the null homologous (in $\H_1(\Sigma,\Z)$) subset.
The group $\Mod$ is generated by Dehn twists $\tau_a$ along simple
loops in $S$.  Denote by $\Jj \subseteq \Mod$ the subgroup generated
by Dehn twists along simple loops in $J$ and by $\Tt \subseteq \Mod$
the subgroup generated by Dehn twists $\tau_a$ for $a \in J$ and
products $\tau_a \tau_b^{-1}$, where $a$ and $b$ are disjoint but homologous
simple loops in $S$.

When $n \le 1$, $\Tt$ is the {\em Torelli group}, i.e. the subgroup of $\Mod$ acting trivially on $\H^1(\Sigma,\Z)$
\cite{Johnson1}.   Johnson constructed epimorphisms
\begin{equation*}
\left\{
\begin{array}{ll}
\Tt \longrightarrow \Lambda^3 \H^1(\Sigma,\Z)/\H^1(\Sigma, \Z)   & \text{ for } n = 0,\\
\Tt \longrightarrow \Lambda^3 \H^1(\Sigma,\Z) & \text{ for } n = 1
\end{array}
\right.
\end{equation*}
and define the kernels to be $\Jj$~\cite{Johnson2, Johnson3}.

For $n > 1$, our definition of $\Tt$ relates to the functorial Torelli group (see \cite{Pu1, VDB}).
The ergodicity of the $\Mod$-action on $\Xc(\SU)$ was proved in \cite{Erg, GX1}.  See \cite{PX1, PX2} for similar results when $G$ is a general compact group.
Here we prove the following ergodicity result:
\begin{thm}\label{thm:main}
Suppose $g = 1$ and $n = 2$.  Then the $\Jj$-action on $\Xc(\SU)$ is ergodic for generic $C_1$ and $C_2$.
\end{thm}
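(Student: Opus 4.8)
The plan is to exploit the special structure of the genus-one, two-boundary surface $\Sigma = \Sigma_{1,2}$ and to bootstrap ergodicity of $\Jj$ from the already-known ergodicity of the full mapping class group $\Mod$ on $\Xc(\SU)$. The key geometric observation is that $\Sigma_{1,2}$ carries a natural separating simple closed curve: cutting along a nonseparating loop is not available to $\Jj$, but there is an essential separating curve $\gamma$ that splits $\Sigma$ into a one-holed torus $\Sigma_{1,1}$ and a pair of pants $\Sigma_{0,3}$ (glued along $\gamma$, with $\gamma$ homologically trivial, so $\tau_\gamma \in \Jj$). Restricting representations to these two pieces gives a map from $\Xc(\SU)$ to a fiber product of the $\SU$-character variety of the one-holed torus (with prescribed boundary trace $t = \tr\rho(\gamma)$) and that of the pair of pants; the fibers of $t$ are the "bending" circles and $\Jj$ contains the bending flow $\tau_\gamma$ along $\gamma$.

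First I would set up coordinates: for $\Sigma_{1,1}$ with $\pi_1 = \langle X, Y\rangle$ and boundary $XYX^{-1}Y^{-1}$, use the trace coordinates $(x,y,z) = (\tr\rho(X), \tr\rho(Y), \tr\rho(XY))$, on which the Dehn twists along the two generators of $\Mod(\Sigma_{1,1})$ act by the Markov-type polynomial automorphisms; crucially every nonseparating loop in $\Sigma_{1,1}$ is \emph{null homologous in} $\Sigma_{1,2}$ once we include the second boundary component, so the entire $\Mod(\Sigma_{1,1})$ embeds into $\Jj(\Sigma_{1,2})$. Second, I would argue that $\Jj$ acts ergodically on each fiber of the pair $(t, \text{pants data})$: along a generic level set $\tr\rho(\gamma) = t$ the restricted representation to $\Sigma_{1,1}$ ranges over the relative character variety $\Xc'$ of the one-holed torus with that boundary trace, and the embedded subgroup $\Mod(\Sigma_{1,1}) \subset \Jj$ acts on it. Ergodicity of $\Mod(\Sigma_{1,1})$ on the one-holed-torus relative character variety is exactly the genus-one case of \cite{Erg}; I would invoke that (for generic $t$) to get ergodicity on the fibers.

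Third comes the transverse direction. Having ergodicity on almost every fiber of $t\colon \Xc(\SU) \to [-2,2]$, I must show $\Jj$ mixes the fibers together, i.e. that the $\sigma$-algebra of $\Jj$-invariant sets that are measurable with respect to $t$ is trivial. This is where the second handle and the twist $\tau_\gamma$ together with twists along other separating curves come in: choosing a different separating curve $\gamma'$ (obtained from $\gamma$ by a homeomorphism, hence also null homologous) gives a second fibration $t' = \tr\rho(\gamma')$ whose fibers are transverse to those of $t$, and one shows — using the Fubini-type argument from \cite{GX1} — that a $\Jj$-invariant function constant on $t$-fibers and on $t'$-fibers must be constant. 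Concretely I would show the two bending functions $t, t'$ have everywhere-independent differentials on a dense open set, so joint level sets are lower-dimensional, and conclude by the standard "ergodic decomposition + two transverse ergodic foliations" lemma.

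The main obstacle I anticipate is the transverse step: unlike the full $\Mod$ case, $\Jj$ is a relatively thin subgroup, and one must verify that the separating curves available to $\Jj$ — together with the Johnson-kernel relations $\tau_a\tau_b^{-1}$ that lie in $\Tt$ but the genuinely null-homologous twists that lie in $\Jj$ — generate enough bending flows to connect almost all fibers. I expect to need a careful enumeration of separating simple closed curves on $\Sigma_{1,2}$ up to the action of $\Jj$ itself, and to check a non-degeneracy (submersion) statement for the simultaneous trace functions, analogous to the "sufficiently many independent functions" criterion used for the $\Mod$-ergodicity theorem in \cite{Erg, GX1}. Accessibility of the generic level sets of the $t$-functions — i.e. that $\Jj$ acts transitively enough on the base $[-2,2]$ after accounting for the constraint imposed by $C_1, C_2$ — is the crux, and genericity of $(C_1, C_2)$ will be used precisely to avoid the measure-zero set of boundary data where these level sets degenerate or where the one-holed-torus slices fail to be ergodic.
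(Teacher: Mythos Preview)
Your central geometric claim is false, and the argument collapses without it. You assert that ``every nonseparating loop in $\Sigma_{1,1}$ is null homologous in $\Sigma_{1,2}$ once we include the second boundary component, so the entire $\Mod(\Sigma_{1,1})$ embeds into $\Jj(\Sigma_{1,2})$.'' But a nonseparating simple closed curve in the one-holed torus piece remains nonseparating in $\Sigma_{1,2}$ (gluing on a pair of pants along $\gamma$ cannot make it separate), and a nonseparating curve is never null-homologous. Concretely, $H_1(\Sigma_{1,2};\Z)\cong\Z^3$, and the standard handle generators $X,Y$ map to independent nonzero classes; the Dehn twists $\tau_X,\tau_Y$ act nontrivially on $H_1$ and hence lie outside $\Jj$ (indeed outside the Torelli group). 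So you cannot invoke ergodicity of $\Mod(\Sigma_{1,1})$ on the fibers of $t=\tr\rho(\gamma)$: the elements producing that ergodicity are exactly the ones $\Jj$ is forbidden to use. Worse, any separating curve of $\Sigma_{1,2}$ contained in the $\Sigma_{1,1}$ piece is boundary-parallel there, so its twist acts trivially on the one-holed-torus moduli; the separating curves that do act nontrivially must cross $\gamma$ and therefore do not respect your fibration at all.

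The paper's route is accordingly quite different and does not pass through any subsurface ergodicity result. It chooses four explicit separating curves on $\Sigma_{1,2}$ (the three commutators $F_iF_jF_i^{-1}F_j^{-1}$ and one further curve $F_0$ obtained from $F_2F_3F_2^{-1}F_3^{-1}$ by a Dehn twist), writes down their trace functions $\p_i$ and the symplectic bivector $W$ explicitly in the seven trace coordinates, and uses Proposition~\ref{prop:discreteflow} to upgrade invariance under each $\tau_i$ to invariance under the Hamiltonian flow of $\p_i$. The substantive step is then a computer-assisted Gr\"obner basis calculation showing that the four fields $\Ham_i=W(d\p_i)$ are independent off a hypersurface $V=\{s=0\}$ and that $\Ham_{12}(s)\notin(k,s)$, so the flow of $\Ham_{12}$ actually crosses $V$. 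This gives transitivity of the flow group on $\X_c^U$ for generic $c$, hence ergodicity of $\Gamma\subset\Jj$. Your instinct that one needs ``sufficiently many independent trace functions'' and a transversality check is correct, but the input has to come entirely from separating-curve traces and the verification is an explicit computation rather than a reduction to a known theorem.
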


The moduli space $\Xc(\SU)$ possesses a symplectic structure.  The group $\Jj$ is generated by simple loops described above.  The same simple loops also correspond to fundamental group elements.  These Dehn twist actions embed into the Hamiltonian vector field flows of the trace functions on these corresponding fundamental group elements.  It is then a routine matter to produce a set of such Hamiltonian vector fields, whose flows are locally transitive on an open dense (Zariski) subset $U \subseteq \Xc(\SU)$.  However since $\Xc(\SU)$ is a real variety ($\SU < \slt$ is a real form), $V = \Xc(\SU) \setminus U$ is of $\R$-codimension 1.  In other words, $V$ may contain ``walls'' between components of $U$.  To prove ergodicity of the $\Jj$-action, we analyze the vector fields along $V$ explicitly.  This then requires an explicit computation of the symplectic form with the aid of a computer.  The inability to carry out these explicit computations for curves of higher genuses and/or with more punctures is the main obstacle in generalizing Theorem~\ref{thm:main} to these curves.

\section{Trace functions and Hamiltonian flows}

This section summarizes some needed results from \cite{GX1}.
Let $X$ be a symplectic manifold and $f : X \to \R$ a smooth function.  Denote by $\Ham(f)$ the associated Hamiltonian vector field.

\begin{prop}\label{prop:Ham}
Let $X$ be a connected symplectic manifold and let $\Ff$ be a set of
real smooth $\R$-valued functions on $X$ such that at every point $x\in X$, the
differentials $df(x)$, for $f\in\Ff$, span the cotangent space
$T_x^*(X)$. Then the group generated by the Hamiltonian flows
of the vector fields $\Ham(f)$, for $f\in\Ff$, acts
transitively on $X$.
\end{prop}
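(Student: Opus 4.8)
The plan is to reduce the statement to the single geometric fact that the Hamiltonian vector fields span every tangent space, which follows immediately from the spanning hypothesis on the differentials. First I would use that the symplectic form $\omega$ defines, at each $x\in X$, a linear isomorphism $T_xX \to T_x^*(X)$, $v\mapsto \iota_v\omega$, and that by the defining relation $\iota_{\Ham(f)}\omega = df$ the vector $\Ham(f)(x)$ is precisely the preimage of $df(x)$ under this isomorphism. Consequently the assignment $df(x)\mapsto\Ham(f)(x)$ is linear and invertible, so the hypothesis that $\{df(x):f\in\Ff\}$ spans $T_x^*(X)$ is \emph{equivalent} to the assertion that $\{\Ham(f)(x):f\in\Ff\}$ spans $T_xX$ at every $x$. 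This turns the proposition into a general statement about a family of vector fields whose values span the tangent space everywhere.

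Let $\Gg$ denote the group of diffeomorphisms generated by the (local) flows $\Phi^f_t$ of the fields $\Ham(f)$, $f\in\Ff$, and consider the orbit $\Gg\cdot x_0$ of a point $x_0$. The key step is to prove that each orbit is open. Fix a point $y$ lying in some orbit; writing $2m=\dim X$ and using that the $\Ham(f)(y)$ span $T_yX$, I would select $f_1,\dots,f_{2m}\in\Ff$ so that $\Ham(f_1)(y),\dots,\Ham(f_{2m})(y)$ form a basis of $T_yX$, and consider the map
\[
\Psi(t_1,\dots,t_{2m}) \;=\; \Phi^{f_1}_{t_1}\circ\cdots\circ\Phi^{f_{2m}}_{t_{2m}}(y)
\]
defined for $(t_1,\dots,t_{2m})$ near the origin. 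Its differential at the origin sends $(t_1,\dots,t_{2m})$ to $\sum_i t_i\,\Ham(f_i)(y)$, which is an isomorphism by the choice of basis, so by the inverse function theorem $\Psi$ is a local diffeomorphism onto a neighborhood of $y$. Since every value of $\Psi$ lies in the same orbit as $y$, this neighborhood is contained in the orbit, and the orbit is open.

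To conclude, the orbits of $\Gg$ partition $X$ into disjoint open sets; the complement of any one orbit, being a union of the remaining orbits, is open as well, so each orbit is both open and closed. As $X$ is connected, there is exactly one orbit, i.e.\ $\Gg$ acts transitively on $X$. I expect the only delicate point to be the bookkeeping with \emph{local} flows: on a general symplectic manifold the Hamiltonian flows need not be complete, so $\Phi^f_t$ may be defined only for small $t$. This causes no real difficulty, since the openness argument uses only small-time flows and the reachable set under finite compositions of such local flows in both time directions, for which every step above goes through verbatim; everything else is a formal consequence of the nondegeneracy of $\omega$ and the inverse function theorem.
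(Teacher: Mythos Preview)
Your argument is correct and is exactly the approach the paper has in mind: the paper's proof consists of the single sentence ``The proof is a straightforward application of the implicit function theorem; see Lemma~3.2 in \cite{GX1},'' and what you have written is precisely that standard argument (the inverse and implicit function theorems being interchangeable here). Your explicit handling of the local-flow issue and the open--closed orbit decomposition fills in the details the paper leaves to the reference.
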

\begin{proof}
The proof is a straightforward application of the implicit function therem; see Lemma 3.2 in \cite{GX1}.
\end{proof}

Let $G = \slt$ and $\Cc = \{C_1, \cdots , C_n\}$ be a family of conjugacy classes in $G$ such that $C_i$ is non-parabolic for each $1 \le i \le n$.  Let $c = (c_1,\cdots,c_n) \in \C^n$ such that
$c_i = \tr(A) \in \C$ for all $A \in C_i$.
Then the representation variety is equivalently defined as
\begin{align*}
\Hom_\Cc(\pi, G) &= \{\rho \in \Hom(\pi, G): \ \ \tr(\rho(B_j)) & = c_j, \ \  \mbox{for} \ \  1 \le j \le n \}.
\end{align*}

In this setting, if $\alpha\in\pi$ is a homotopy class of based loops,
then $\f_\alpha$, the {\em trace function\/} of $\alpha$ on $\Xc$, is defined as
$$
\Hom(\pi,\slt) \xrightarrow{\f_\alpha} \C; \ \
\rho \longmapsto  \tr\big(\rho(\alpha)\big).
$$
Since the function $\slt \xrightarrow{\tr} \C$ is $\Inn(\pi)$-invariant,
$\f_\alpha$ defines a function (also denoted by $\f_\alpha$) on $\Xc(\slt)$.
Furthermore, when $G = \SU$, $c \in I^n$, where $I = [-2,2]$.



\begin{prop}\label{prop:discreteflow}
Let $\alpha$ be a simple separating curve on $\Sigma$
with
Dehn twist
$\tau_\alpha$. Let $\psi : \Xc \to \R$ be a
measurable function invariant under the cyclic group
$\langle (\tau_\alpha)_* \rangle$. Then $\psi$ is almost everywhere invariant
under the flow of $\Ham(\f_\alpha)$.
\end{prop}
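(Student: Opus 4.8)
The plan is to combine Goldman's explicit description of the Hamiltonian flow of a trace function along a simple closed curve (these are precisely the facts from \cite{GX1} that the present section collects) with the ergodicity of irrational rotations of a circle. Write $\Phi_t$ for the flow of $\Ham(\f_\alpha)$ on the smooth part of $\Xc$. Since $\{\f_\alpha,\f_\alpha\}=0$, $\Phi_t$ preserves every level set of $\f_\alpha$. Because $\alpha$ is separating, cutting $\Sigma$ along $\alpha$ expresses $\pi = \pi'\ast_{\langle\alpha\rangle}\pi''$, and Goldman's twist-flow formula identifies $\Phi_t[\rho]$: it agrees with $\rho$ on $\pi'$ and with $\Ad_{\exp(t\,\F(\rho(\alpha)))}\circ\rho$ on $\pi''$, where $\F(A)\in\su$ is (a fixed multiple of) the traceless part of $A\in\SU$. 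Writing $\f_\alpha([\rho]) = 2\cos\theta$ with $\theta\in(0,\pi)$ — which excludes only $\rho(\alpha)=\pm\Id$, an $\f_\alpha$-preimage of $\{\pm 2\}$ and hence a $\mu$-null set — we have $\rho(\alpha)=\exp(\theta\widehat{\F})$ for a suitable $\widehat{\F}\in\su$ with $\tr\exp(s\widehat{\F})=2\cos s$, and $\F(\rho(\alpha))$ is a multiple of $\sin\theta\cdot\widehat{\F}$. Hence $\Phi$ restricts to a periodic flow on the level set $\{\f_\alpha=2\cos\theta\}$, with period a continuous function $P$ of $\theta$.

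First I would identify the Dehn twist with a time-$t$ map of $\Phi$. Since $\tau_\alpha$ acts on $\pi$ by fixing $\pi'$ and conjugating $\pi''$ by $\alpha$, we get $(\tau_\alpha)_*[\rho]=\rho$ on $\pi'$ and $\Ad_{\rho(\alpha)}\circ\rho = \Ad_{\exp(\theta\widehat{\F})}\circ\rho$ on $\pi''$. Comparing with $\Phi_t$ yields $(\tau_\alpha)_* = \Phi_{t(\cdot)}$, where the return time $t(x)$ depends only on $\f_\alpha(x)$ — with the standard conventions $t(x) = \theta(x)/\sin\theta(x)$. Thus $(\tau_\alpha)_*$ acts on the level-set circle indexed by $\theta$ as a rotation through the fraction $\beta(x) := t(x)/P(x)$ of a full turn, and $\beta$ is a nonconstant real-analytic function of $\f_\alpha(x)$. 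Consequently $B := \{x : \beta(x)\in\Q\}$ is an $\f_\alpha$-preimage of a countable set; since $\mu$ is absolutely continuous and a nonconstant real-analytic function has null level sets, $\mu(B)=0$. So off $B$ the cyclic group $\langle(\tau_\alpha)_*\rangle$ acts on each $\Phi$-orbit circle by an irrational rotation.

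The Fourier step comes next; we may assume $\psi$ bounded (replace it by $\arctan\circ\psi$). Off $B$ and off the null set where $\rho(\alpha)=\pm\Id$, the $\Phi$-orbit of $x$ is a circle of period $P(x)$, with $x\mapsto P(x)$ continuous; for $k\in\Z$ put $\Psi_k(x) := \int_0^1 \psi(\Phi_{uP(x)}x)\,e^{2\pi iku}\,du$, the $k$-th Fourier coefficient of $\psi$ along that circle. Using that $\psi\circ\Phi_{t(\cdot)}=\psi$ $\mu$-a.e.\ (the hypothesis, since $(\tau_\alpha)_*=\Phi_{t(\cdot)}$) and that $t$ is $\Phi$-invariant, one gets $\Psi_k(x) = \int_0^1 \psi(\Phi_{uP(x)+t(x)}x)\,e^{2\pi iku}\,du = e^{-2\pi ik\beta(x)}\Psi_k(x)$ after the substitution $u\mapsto u+\beta(x)$ and the periodicity $\Phi_{P(x)}x=x$; since $\beta(x)\notin\Q$ a.e., $\Psi_k\equiv 0$ $\mu$-a.e.\ for every $k\ne 0$. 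Hence $\psi$ agrees $\mu$-a.e.\ with its orbit-average $\Psi_0$, which is genuinely $\Phi$-invariant; since $\Phi_s$ preserves $\mu$ for every $s$, it follows that $\psi\circ\Phi_s = \psi$ $\mu$-a.e.\ for every $s$, which is the claim.

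I expect the one delicate point to be the first two steps: extracting from Goldman's twist-flow formula the precise statement that $(\tau_\alpha)_*$ is the time-$\theta/\sin\theta$ map of $\Phi$, and that the resulting rotation number $\beta$ on the level sets is nonconstant — it is this, rather than the routine measure theory that follows, that forces $\langle(\tau_\alpha)_*\rangle$ to act by genuinely irrational rotations on almost every level set. In the final write-up these first steps can largely be cited from \cite{GX1}.
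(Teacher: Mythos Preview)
Your proposal is correct and is essentially the argument the paper cites: the paper's own proof is simply a reference to Proposition~5.4 of \cite{GX1}, and you have accurately reconstructed that argument --- Goldman's twist-flow formula identifies $(\tau_\alpha)_*$ with the time-$\theta/\sin\theta$ map of the periodic Hamiltonian flow on each level set $\{\f_\alpha = 2\cos\theta\}$, the resulting rotation number $\theta/\pi$ is irrational for a.e.\ $\theta$, and ergodicity of irrational circle rotations (your Fourier step) upgrades $\langle(\tau_\alpha)_*\rangle$-invariance to full flow-invariance. The only minor polish needed is to make the Fubini/disintegration step explicit when you restrict the a.e.\ invariance hypothesis to individual orbit circles.
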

\begin{proof}
See Proposition 5.4 in \cite{GX1}.

\end{proof}
For the rest of the paper, we shorten $\Xc(\SU)$ (resp. $\XcU(\SU)$) to $\X_c$ (resp. $\X_c^U$).

\section{Ergodicity}
For $g = 0 \ \text{ and } \ n = 4$ or for $g = 1 \ \text{ and } \ n = 2$, the fundamental group $\pi$ is isomorphic to the free group of three generators
$$
\Ft = \langle F_1, F_2, F_3, F_4 \mid \prod F_i \rangle,
$$
where $F_i$ corresponds to a simple closed curve on $\Sigma$.
By convention, we also use elements in $\pi$ to denote curves they represent
on $\Sigma$.


The coordinate ring of $\Hom(\Ft,\slt)/\slt$ is $\C[K]/(k)$, where
$$
K = \{\f_4, \f_1,\f_2,\f_3,\f_{12},\f_{13},\f_{23}\}
$$
with
$$
\f_{i} = \tr(\rho(F_i)), \ \ \f_{ij} = \tr(\rho(F_i F_j)) \text{ for
} \rho \in \Hom(\Ft, \slt)
$$
and
\begin{equation}\label{eq:fourhole}
k := \f_{12}^2 + \f_{23}^2 + \f_{13}^2 + \f_{12}\f_{23}\f_{13} -
\c_{12} \f_{12} - \c_{23} \f_{23} - \c_{13} \f_{13}  - \c_0,
\end{equation}
where
\begin{align}\label{eq:fourholecoeffs}
\c_{12} & :=  \f_1 \f_2 + \f_3 \f_4, \notag\\
\c_{23} & :=  \f_2 \f_3 + \f_1 \f_4, \notag\\
\c_{13} & :=  \f_1 \f_3 + \f_2 \f_4, \notag \\
\c_0 & :=  4 - \f_1^2 - \f_2^2 - \f_3^2 - \f_4^2 - \f_1 \f_2 \f_3
\f_4.
\end{align}
Let
$$\Omega = d\f_4 \wedge d\f_1 \wedge d\f_2 \wedge d\f_3 \wedge d\f_{12} \wedge d\f_{13} \wedge d\f_{23}.$$
A priori, we consider elements in $\R[K]$ as functions on $I^7$.  Then $\X \subseteq I^7$ is a compact component of
$\{v \in I^7 : k(v) = 0\}.$
Hence elements in $\R[K]$ also restrict to functions on $\X$.

\subsection{The 4-holed sphere}
Suppose $g=0$ and $n=4$.
The boundary components of $\Sigma$ are
$$B = (B_1, B_2, B_3, B_4).$$  The fundamental group $\pi$ is isomorphic to $\Ft$ with the isomorphism $B_i
\mapsto F_i$.
Let
$$
\F : \X \longrightarrow I^4, \ \ \F(K) = (\f_1, \f_2, \f_3, \f_4).
$$
Then for $c = (c_1, c_2, c_3, c_4) \in I^4$, $\X_c$ is a compact component of $\F^{-1}(c)$.

The Johnson kernel $\Jj$ for the 4-holed torus is trivial as any non-trivial Dehn twist must be along simple curves that separate the four boundary components into pairs.  However one may study a different group action as follows:  Fix the boundary components into two pairs $\{B_1, B_2\}$ and $\{B_3, B_4\}$.  Let $J' \subseteq S$ be the subset containing all the curves separating $\Sigma$ into two pairs of pants containing $\{B_1, B_2\}$ and $\{B_3, B_4\}$, respectively.  Let $\Jj' \subseteq \Mod$ be the subgroup generated by Dehn twists along elements in $J'$.  In this section, we study the $\Jj'$-action on $\X_c^U$.  This problem is interesting in its own right and instructive in the study of the $\Jj$-action on $\X_c^U$ when $\Sigma$ is the 2-holed torus.

The symplectic bi-vector field relating to $\omega$ is $$W =
(\partial_{\f_{12}} \wedge
\partial_{\f_{23}} \wedge
\partial_{\f_{13}})(d k).$$


By convention, we use $F_i$ to also denote a simple closed curve it represents.  The Dehn twist along the simple closed curve $F_2 F_3$ takes the simple
closed curve $F_1F_2$ to a simple closed curve $F_0$. Denote by
$\tau_{12}, \tau_0$ the Dehn twists along $F_1 F_2$ and $F_0$,
respectively.  Let
$$\Gamma = \langle \tau_{12}, \tau_0\rangle.$$
\begin{rem} \label{rem:T}
Both $F_1 F_2$ and $F_0$ separate $\Sigma$ into two components each
containing $\{B_1, B_2\}$ and $\{B_3, B_4\}$, respectively. Hence
$\Gamma \subseteq \Jj'$.
\end{rem}
Let $\M$ be the space
of measurable functions $\X \to \R$.  The trace functions of $F_1F_2, F_0$ are, respectively,
$$\p_{12} = \f_{12}, \ \ \ \p_0 = \c_{12} - \f_{23} \f_{13} - \f_{12}.
$$
Let $\Ham_i = W(d \p_i)$ be the Hamiltonian vector field (notice that, to conserve notation, the subscript index $i$ may mean either a number or a pair of numbers).
Let $\Gg_i = \Gg(\p_i)$ where $\Gg(\p_i)$ is the group generated by the Hamiltonion flow of $\Ham(\p_i)$.
Let $\Gg$ be the group generated by $\bigcup_i \Gg_i$.
Suppose $f \in \M^{\langle \tau_i\rangle}$. Then by Proposition~\ref{prop:discreteflow},
$f \in \M^{\Gg_i}$. Hence $\M^{\langle \tau_i\rangle} \subseteq \M^{\Gg_i}.$ Therefore
$$
\M^\Gamma \subseteq \bigcap_i \M^{\langle \tau_i\rangle} \subseteq
\bigcap_i \M^{\Gg_i} = \M^\Gg.
$$

Let  $s \in \R[K]$ such that
$$s \Omega = d\f_4 \wedge d\f_1 \wedge d\f_2 \wedge d\f_3 \wedge dk \wedge d\p_0 \wedge d\p_{12}.$$
A direct calculation shows that
$$
s = 2\f_{23}^2- \c_{23} \f_{23} - 2\f_{13}^2 + \c_{13} \f_{13}.
$$
Then
$$
V = \{v \in I^7 : s(v) = 0\}
$$
is the {\em dependency locus}; i.e. by definition,
\begin{lem}\label{lem:singular}
The Hamiltonian vector fields $\Ham_0, \Ham_{12}$ are linearly dependent at
$v$ only if $v \in V$.
\end{lem}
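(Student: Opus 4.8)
The statement to prove, Lemma~\ref{lem:singular}, asserts that the two Hamiltonian vector fields $\Ham_0 = W(d\p_0)$ and $\Ham_{12} = W(d\p_{12})$ are linearly dependent at a point $v\in\X$ only if $s(v)=0$. The plan is to relate linear dependence of these vector fields to the vanishing of an explicit $7$-form, namely the wedge $d\f_4\wedge d\f_1\wedge d\f_2\wedge d\f_3\wedge dk\wedge d\p_0\wedge d\p_{12}$, which by the displayed identity equals $s\,\Omega$. Since $\Omega$ is a nowhere-vanishing volume form on $I^7$, this $7$-form vanishes at $v$ exactly when $s(v)=0$.

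First I would make precise the sense in which $W$, $\Ham_0$, $\Ham_{12}$ live on $\X$: the bivector $W = (\partial_{\f_{12}}\wedge\partial_{\f_{23}}\wedge\partial_{\f_{13}})(dk)$ is the Poisson bivector obtained by contracting the trivector with $dk$, and it is tangent to the level sets of $k$ (indeed $W(dk)=0$), so $\Ham_0$ and $\Ham_{12}$ are tangent to $\X\subseteq\{k=0\}$; on the smooth locus $\X^U$ of the three-dimensional variety $\X$ the form $\omega$ dual to $W$ is nondegenerate. Next, the key algebraic step: $\Ham_0$ and $\Ham_{12}$ are linearly dependent at $v$ precisely when $W(d\p_0)\wedge W(d\p_{12}) = 0$ as a bivector at $v$, and—because $W$ restricted to the symplectic leaf is a nondegenerate Poisson structure—this is equivalent to $d\p_0\wedge d\p_{12}$ being degenerate when restricted to $T_v\X$, i.e. to $d\p_0$, $d\p_{12}$ failing to be linearly independent as covectors on $\X$. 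That in turn happens iff $d\f_4\wedge d\f_1\wedge d\f_2\wedge d\f_3\wedge dk\wedge d\p_0\wedge d\p_{12} = 0$ at $v$: the four exact forms $d\f_i$ together with $dk$ cut out the cotangent space to $\X$ inside $I^7$ (on the smooth locus), so appending $d\p_0\wedge d\p_{12}$ detects exactly whether these two further covectors are independent modulo the span of $d\f_1,\dots,d\f_4,dk$. Finally, I would invoke the stated direct calculation $s = 2\f_{23}^2 - \c_{23}\f_{23} - 2\f_{13}^2 + \c_{13}\f_{13}$, so that the $7$-form equals $s\,\Omega$, and conclude: linear dependence of $\Ham_0,\Ham_{12}$ at $v$ forces $s(v)=0$, i.e. $v\in V$.

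The main obstacle, and the place where care is needed, is the middle equivalence: translating "the pushed-forward bivectors $\Ham_0,\Ham_{12}$ are proportional" into "the pulled-back covectors $d\p_0, d\p_{12}$ are dependent on $\X$," and then into the vanishing of the $7$-form. One has to be careful that $W$ is only a Poisson (not symplectic) structure on the ambient $I^7$—it is degenerate transverse to the leaves $\{k=\text{const}\}$—so the correspondence between vector fields and covectors is only an isomorphism after restricting to $T\X$ and quotienting by the conormal direction $dk$. Concretely, the clean way to handle this is to note that $W = \iota_{dk}(\partial_{\f_{12}}\wedge\partial_{\f_{23}}\wedge\partial_{\f_{13}})$ pairs $d\p_0\wedge d\p_{12}\wedge dk$ with $\partial_{\f_{12}}\wedge\partial_{\f_{23}}\wedge\partial_{\f_{13}}$ to recover (up to the harmless nonzero factor coming from $d\f_4\wedge d\f_1\wedge d\f_2\wedge d\f_3$) precisely the coefficient $s$; thus $\Ham_0\wedge\Ham_{12} = 0$ at $v$ implies $s(v)=0$. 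Since only one direction of the implication is claimed, I do not need the converse, which simplifies matters—the converse can fail on the smaller locus where the $d\f_i$ themselves degenerate, but that is irrelevant here. The remaining computation—verifying the explicit formula for $s$—is the routine determinant/wedge calculation already asserted in the text and does not need to be reproduced.
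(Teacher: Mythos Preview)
Your proof is correct and matches the paper's approach: the paper treats the lemma as holding ``by definition'' of $V$ as the dependency locus (no separate proof is given), and you have simply unpacked the linear-algebraic reason why vanishing of the $7$-form $s\,\Omega$ is forced by linear dependence of $\Ham_0,\Ham_{12}$. One small slip worth fixing: $\X=\{k=0\}\subset I^7$ is six-dimensional, not three-dimensional---you are evidently thinking of the three-dimensional fibre $\{\f_1=c_1,\dots,\f_4=c_4\}\cong I^3$ (with coordinates $\f_{12},\f_{13},\f_{23}$) in which $\X_c$ sits as a surface, and indeed your whole argument (the identification of $s$ with $\det(dk',d\p_0',d\p_{12}')$ and the cross-product reasoning) correctly takes place in that fibre.
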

\begin{rem}
The purpose is to find a sufficient condition for the set $\{d\p_i\}$ to be linearly independent at $v \in \X_c$.  The choices of $d\f_i, \ 1 \le i \le 4$ and $dk$ are not canonical.  They are chosen because the $f_i \ 's \ (1 \le i \le 4)$ are the boundary traces and $k$ is the defining equation of $\X$; hence, they are constant on $\X_c$.
\end{rem}

\begin{lem}\label{lem:transverse}
The vector field $\Ham_{12}$ is not tangent to $V$ for almost all $c \in I^4$.
\end{lem}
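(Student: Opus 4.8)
The strategy is to reduce Lemma~\ref{lem:transverse} to the non-vanishing of a single explicit polynomial on a single explicit variety. Since $\f_1,\dots,\f_4$ and $k$ are Casimirs of the Poisson bivector $W$, the flow of $\Ham_{12}=W(d\p_{12})$ preserves the symplectic leaf $\X_c$, on which $V$ is cut out by the single equation $s=0$; hence at a point $v\in V\cap\X_c$ the field $\Ham_{12}$ is tangent to $V$ precisely when $\Ham_{12}(s)(v)=0$. Contracting $W$ against $d\p_{12}=d\f_{12}$ using \eqref{eq:fourhole} gives, up to an overall sign,
$$
\Ham_{12}=(\partial_{\f_{13}}k)\,\partial_{\f_{23}}-(\partial_{\f_{23}}k)\,\partial_{\f_{13}},
$$
so the obstruction to tangency along $V$ is the single polynomial
$$
q:=\Ham_{12}(s)=(\partial_{\f_{13}}k)(\partial_{\f_{23}}s)-(\partial_{\f_{23}}k)(\partial_{\f_{13}}s)\in\R[K].
$$
Thus the lemma is equivalent to the claim that for almost every $c\in I^4$ the polynomial $q$ does not vanish identically on the curve $V\cap\X_c$.

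The first step is to write $q$ out explicitly from \eqref{eq:fourhole}, \eqref{eq:fourholecoeffs}, and the formula for $s$ in Lemma~\ref{lem:singular}; this is a routine polynomial manipulation, best carried out with a computer algebra system.

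The second step is a measure-theoretic reduction. The map $\F\colon\X\cap V\to I^4$ has $5$-dimensional source and $1$-dimensional generic fibre $\X_c\cap V$. If the set $E\subseteq I^4$ of parameters for which $q|_{\X_c\cap V}\equiv 0$ had positive measure, then $\F^{-1}(E)\cap\X\cap V$ would be a subset of $\{q=0\}\cap\X\cap V$ of dimension at least $4+1=5$, forcing $q$ to vanish identically on some irreducible component $Y_0$ of $\X\cap V$ with $\overline{\F(Y_0)}=I^4$. Consequently it suffices to exhibit one point $v\in I^7$ with $k(v)=0$, $s(v)=0$, $q(v)\neq 0$, lying on the relevant compact component $\X$, and at which $d\f_1,\dots,d\f_4,dk,ds$ are linearly independent: the last condition makes $v$ a smooth point of $\X\cap V$ at which $\F$ is submersive, so the component through $v$ dominates $I^4$ and, not lying in $\{q=0\}$, excludes every such $Y_0$. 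Then $E$ has measure zero, which is the assertion of the lemma.

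The substance of the argument — and the step I expect to be the main obstacle — is this last one: one must actually produce such a witness point and confirm that it lies on the correct component. The real algebraic set $\{k=0\}\cap\{s=0\}\subseteq I^7$ decomposes into several pieces, only one of which carries the $\Jj'$-action, so locating a numerical solution, identifying its component, and checking the rank condition on $d\f_1,\dots,d\f_4,dk,ds$ there is exactly the kind of explicit, computer-assisted computation singled out in the introduction as the bottleneck for curves of this complexity; once it is done for $g=1$, $n=2$, Lemma~\ref{lem:transverse} follows. The analogous statement needed for $\Ham_0$ along $V$ is handled by the same scheme, with $\p_0$ in place of $\p_{12}$.
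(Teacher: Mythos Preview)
Your reduction is sound: the lemma does come down to showing that the polynomial $q:=\Ham_{12}(s)$ does not vanish identically on $\{k=s=0\}$, and your measure-theoretic argument from there to ``almost every $c$'' is fine. But the paper takes a more direct algebraic route than your witness-point scheme. It simply observes that if $\Ham_{12}$ were tangent to $V$ along $V\cap\X$, then $\Ham_{12}(s)$ would lie in the ideal $(k,s)\subseteq\R[K]$, and then checks by a Gr\"obner basis computation that the normal form of $\Ham_{12}(s)$ modulo $(k,s)$ is nonzero. This is a single ideal-membership test in the polynomial ring in all seven variables $K$, with $\f_1,\dots,\f_4$ treated as indeterminates rather than fixed parameters. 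Once $\Ham_{12}(s)\notin(k,s)$ is established, the genericity in $c$ follows immediately, with no need to locate a numerical point, no need to verify which real component it lies on, and no need for the rank condition on $d\f_1,\dots,d\f_4,dk,ds$ that you introduce. Your approach would work, but it manufactures exactly the difficulties (component identification on a real variety) that the Gr\"obner-basis argument sidesteps.

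Two small corrections. First, this lemma is in the $4$-holed sphere section ($g=0$, $n=4$); your remark ``once it is done for $g=1$, $n=2$'' belongs to Lemma~\ref{lem:2hole transverse}, not here. Second, there is no need for an analogous transversality statement for $\Ham_0$: in the proof of Theorem~\ref{thm:4-hole} only $\Ham_{12}$ is used to flow across $Q$, so your final sentence can be dropped.
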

\begin{proof}
Suppose $\Ham_{12}$ is tangent to $V$.  Then $\Ham_{12}$ is a derivation on the ring of functions on $V \cap \X$.
Hence
$$
\Ham_{12}(s) =
  (2 \f_{13} + \f_{12} \f_{23} - \c_{13})
  (4 \f_{23} - \c_{23}) + (-2 \f_{23} - \f_{12} \f_{13} + \c_{23})
         (-4 \f_{13} + \c_{13})
$$
is zero in $\R[K]/(k,s)$; that is,  $\Ham_{12}(s) \in (k,s)$.

We now compute a Gr\"obner basis for $(k,s) \subseteq \R[K]$.
A direct computation shows that the residue of $\Ham_{12}(s)$ is not zero.  This implies that $\Ham_{12}(s) \not\in (k,s)$.  This implies that for a generic $c \in I^4$, $\Ham_{12}$ is not tangent to $V$.
\end{proof}
\begin{thm} \label{thm:4-hole}
The $\Gamma$-action on $\X_c$ is ergodic for almost every $c \in I^4$.
\end{thm}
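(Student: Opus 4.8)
The plan is to upgrade $\Gamma$-invariance to invariance under the Hamiltonian flows of $\p_{12}$ and $\p_0$, to show that these flows are locally transitive away from the dependency locus $V$, and then to use transversality to cross the ``walls'' of $V$. Since $F_1F_2$ and $F_0$ are simple separating curves carrying the Dehn twists $\tau_{12}$ and $\tau_0$, Proposition~\ref{prop:discreteflow} yields $\M^{\langle\tau_{12}\rangle}\subseteq\M^{\Gg_{12}}$ and $\M^{\langle\tau_0\rangle}\subseteq\M^{\Gg_0}$, so that $\M^\Gamma\subseteq\M^\Gg$, exactly as recorded before Lemma~\ref{lem:singular}. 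Since constant functions lie in $\M^\Gamma$, it therefore suffices to show that for almost every $c\in I^4$ every $\psi\in\M^\Gg$ is almost everywhere constant on $\X_c$.

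First I would fix a generic $c$, so that $\X_c^U$ is a nonempty dense open symplectic $2$-manifold in $\X_c$, and restrict attention to $\X_c^U\setminus V$. On this set Lemma~\ref{lem:singular} gives that $\Ham_0$ and $\Ham_{12}$ are linearly independent; since the Poisson bivector $W$ is nondegenerate on each symplectic leaf, this is the same as saying that $d\p_0$ and $d\p_{12}$ span the (two-dimensional) cotangent space at each point of $\X_c^U\setminus V$. Proposition~\ref{prop:Ham}, applied to each connected component of $\X_c^U\setminus V$, then shows that the group generated by the flows of $\Ham_0$ and $\Ham_{12}$ acts transitively on that component, so $\psi$ must be almost everywhere equal to a single constant on each connected component of $\X_c^U\setminus V$.

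The remaining and decisive point is to show that all these constants coincide, i.e. that the walls of $V$ can be crossed. Here I would invoke Lemma~\ref{lem:transverse}: for almost every $c$ the field $\Ham_{12}$ is nowhere tangent to $V$, and in fact the tangency locus is contained in a proper algebraic subset of $V\cap\X_c$, hence meets $\X_c$ in real codimension at least two. Consequently, for almost every $v\in V\cap\X_c^U$ the integral curve of $\Ham_{12}$ through $v$ crosses $V$ transversally at $v$ and so contains points of $\X_c^U\setminus V$ on both local sides of $V$ near $v$; since $\psi$ is almost everywhere invariant under this flow and almost everywhere constant on the complementary components, the two components adjacent to $V$ near $v$ carry the same value of $\psi$. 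Propagating this along $V$ and using connectedness of $\X_c^U$ for generic $c$, one concludes that $\psi$ is almost everywhere constant on $\X_c^U$, hence on $\X_c$, which is the asserted ergodicity.

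The hard part will be this wall-crossing step: one needs, for generic $c$, enough control on the real hypersurface $V\cap\X_c^U$ — that it is a smooth curve off a $\mu$-null set, that the exceptional loci (its singular points, the tangency set bounded by Lemma~\ref{lem:transverse}, and the non-manifold part $\X_c\setminus\X_c^U$) are all negligible, and that the transverse $\Ham_{12}$-crossings actually glue the complementary components into one connected set. The transversality itself is exactly Lemma~\ref{lem:transverse}, established through the explicit Gr\"obner basis computation; it is the need to perform the analogous explicit computation for surfaces of higher genus or with more boundary components that obstructs extending the method beyond $(g,n)=(0,4)$ and $(1,2)$.
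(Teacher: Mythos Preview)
Your proposal is correct and follows essentially the same route as the paper's proof: pass from $\Gamma$-invariance to $\Gg$-invariance via Proposition~\ref{prop:discreteflow}, use Lemma~\ref{lem:singular} and Proposition~\ref{prop:Ham} to get constancy on each component of $\X_c^U\setminus V$, and then invoke Lemma~\ref{lem:transverse} to flow $\Ham_{12}$ across the walls and conclude constancy on all of $\X_c^U$. One small overstatement: Lemma~\ref{lem:transverse} does not give that $\Ham_{12}$ is \emph{nowhere} tangent to $V$, only that the tangency locus is a proper subvariety (your second clause), which is exactly what the wall-crossing argument needs.
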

\begin{proof}  Suppose $f \in \M^\Gamma$.  Then $f \in \M^\Gg$.
For almost all $c \in I^4$, the
set $Q=\X_c^U \cap V$ has measure zero and divides $\X_c^U$ into a
finite number of components. Let $A \subseteq \X_c^U \setminus Q$ be a
connected component. By Lemma~\ref{lem:singular}, the fibres of
$\p_{12}, \p_0$ are tangent to each other at $v$ only if $v \in Q$.
Hence, by Proposition \ref{prop:Ham}, $f$ must be constant almost everywhere on $A$.

Lemma~\ref{lem:transverse} implies that there is a Zariski dense
subset of $Q$ upon which $\Ham_{12}$ is not tangent to $Q$. Hence there
exists a smooth vector field (namely $\Ham_{12}$) that flows across
$Q$ between adjacent components.  This implies that if $v_0, v_1 \in
\X_c^U \setminus Q$, then there exists $g \in \G$ such that $g(v_0) =
v_1$.  Since $\X_c^U$ is smooth and connected, $f$ is constant almost everywhere on $\X_c^U \setminus Q$.
Since $\X_c^U$ is open and dense in $\X_c$ and $Q$ has measure zero, the theorem follows.
\end{proof}

\subsection{The 2-holed torus}  This case runs in parallel with that of the 4-holed sphere above.  Let $g = 1$ and $n = 2$.  The boundary components of $\Sigma$ are
$B = (B_1, B_2)$.  The fundamental group $\pi$ is isomorphic to $\Ft$ as described by the ribbon graph in Figure~\ref{fig:twoholedtorus}, with $B_1 \mapsto F_1 F_2 F_3$ and $B_2 \mapsto F_1 F_3 F_2$.

\begin{figure}[h]
\centerline{\epsfxsize=3.5in \epsfbox{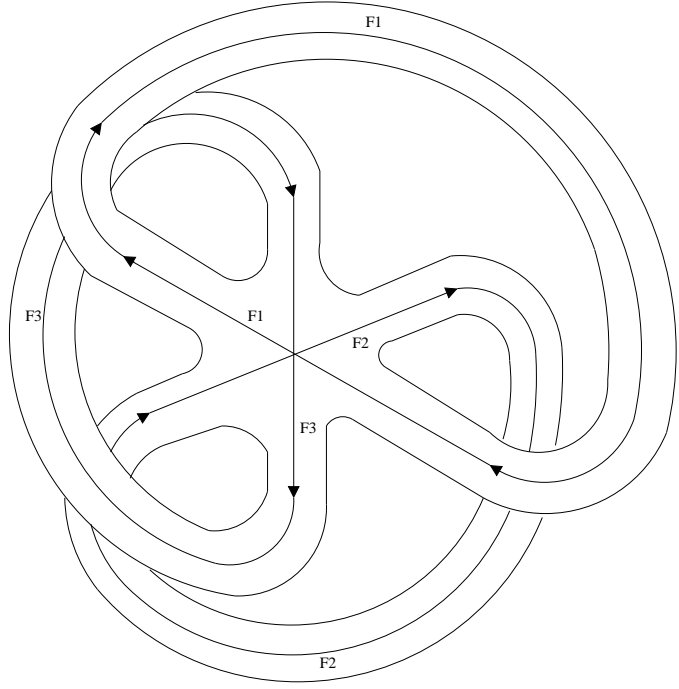}}
\caption{A ribbon graph representing a 2-holed torus}
\label{fig:twoholedtorus}
\end{figure}



The sum and product formulas are, respectively \cite{Trace}:
$$
k_s = \f_{13} \f_{2} + \f_1 \f_{23} + \f_{12} \f_3 - \f_1 \f_2 \f_3,
$$
$$
k_p = \f_2^2 + \f_3^2 + \f_1^2 + \f_{12}^2 + \f_{13}^2 + \f_{23}^2 + \f_{12} \f_{13} \f_{23} - \f_2 \f_3 \f_{23} -
  \f_2 \f_1 \f_{12} - \f_3 \f_1 \f_{13} - 4.
$$
Notice that $k = k_p - \f_4(k_s - \f_4).$  Let
$$
\F : \X \longrightarrow I^2, \ \ \F(K) = (\f_4, k_s - \f_4).
$$
Then for $c = (c_1, c_2) \in I^2$, $\X_c$ is a compact component of $\F^{-1}(c).$
\begin{rem}
The moduli space $\X_c$ is the subspace of $\X$ defined by $k_s = c_1 + c_2$ and $k_p = c_1 c_2$.
\end{rem}

With respect to the coordinates $K$, the
canonical
symplectic bi-vector field $W$ relating to $\omega$ is (this can be computed from the ribbon diagram of Figure~\ref{fig:twoholedtorus}) the $7 \times 7$ skew-symmetric matrix and specified by the following (The unspecified terms are zero; i.e. after filling in these specified terms and making it skew-symmetric, the rest of the terms are zero):
$$
\left\{
\begin{array}{l}
W_{2,3} = -2\f_{12} + \f_1 \f_2, \ \ W_{2,4} = 2 \f_{13} - \f_1 \f_3, \\
W_{2,5} = -\f_1 \f_{12} + 2 \f_2, \ \  W_{2,6} = \f_1 \f_{13} - 2 \f_3, \\
W_{3,4} = -2 \f_{23} + \f_2 \f_3, \ \ W_{3,5} = -2 \f_1 + \f_{12} \f_2, \ \ W_{3,7} = -\f_2 \f_{23} + 2 \f_3, \\
W_{4,6} = 2 \f_1 - \f_{13} \f_3, \ \ W_{4,7} = -2\f_2 + \f_{23} \f_3, \\
W_{5,6} = -\f_{12}\f_{13} - 2\f_{23} + 2\f_2\f_3, \ \ W_{5,7} = 2 \f_{13} + \f_{12}\f_{23} -2\f_1\f_3,\\
W_{6,7} = -2\f_{12} + 2\f_1\f_2 - \f_{13}\f_{23}.
\end{array}
\right.
$$
The Dehn
twist along the simple closed curve $F_1$ takes the simple closed
curve $F_2 F_3 F_2^{-1} F_3^{-1}$ to a simple closed curve $F_0$.
Denote by $\tau_{ij}$ the Dehn twists along $F_i F_j F_i^{-1}
F_j^{-1}$ and by $\tau_0$ the Dehn twist along $F_0$.  Let
$$\Gamma = \langle \tau_{12}, \tau_{23}, \tau_{13}, \tau_0 \rangle.$$
\begin{rem} \label{rem:J}
$\Gamma \subseteq \Jj$.
\end{rem}

Let $\M$ be the space
of all measurable functions $\X \to \R$.
The trace functions for various $i$ are
$$
\p_{12} = -2 + \f_1^2 + \f_{12}^2 - \f_1\f_{12}\f_2 + \f_2^2,
$$
$$
\p_{23} = -2 + \f_2^2 + \f_{23}^2 - \f_2\f_{23}\f_3 + \f_3^2,
$$
$$
\p_{13} = -2 + \f_1^2 + \f_{13}^2 - \f_1\f_{13}\f_3 + \f_3^2,
$$
$$
\p_0 = \c_0 + \f_1\f_{12}\f_2 + \f_4\f_{13}\f_2 +
\f_1 \f_4 \f_{23}
$$
$$
+ \f_{12} \f_4 \f_3 - \f_1\f_{13}\f_3  -
\f_1\f_{12}\f_{23}\f_3 + \f_2\f_{23}\f_3 + \f_1^2\f_3^2 - 2.
$$

Let $\Ham_i = W(d \p_i)$
(again the subscript $i$ may denote a number or a pair of numbers)
be the Hamiltonian vector field.
Let $\Gg_i = \Gg(\p_i)$ where $\Gg(\p_i)$ is the group generated by the Hamiltonion flow of $\Ham(\p_i)$.
Let $\Gg$ be the group generated by $\bigcup_i \Gg_i$.
Suppose $f \in \M^{\langle \tau_i\rangle}$. Then by Proposition~\ref{prop:discreteflow},
$f \in \M^{\Gg_i}$. Hence $\M^{\langle \tau_i\rangle} \subseteq \M^{\Gg_i}.$ Therefore
$$
\M^\Gamma \subseteq \bigcap_i \M^{\langle \tau_i\rangle} \subseteq
\bigcap_i \M^{\Gg_i} = \M^\Gg.
$$

Let $V \subseteq I^7$ be the subvariety defined by the polynomial $s = 0$, where
$$s \Omega = d \f_4 \wedge d (k_p - \f_4) \wedge d k \bigwedge_i d \p_i.$$
By definition, $V$ is the dependency locus:
\begin{lem}\label{lem:2hole singular}
If the set $\{\Ham_i\}$ is linearly dependent at $v \in \X_c^U$, then $v \in V$.
\end{lem}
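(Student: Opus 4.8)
The plan is to reduce Lemma~\ref{lem:2hole singular} to elementary linear algebra in the cotangent space $T_v^*(I^7)\cong\R^7$, in exact parallel with the ``by definition'' reasoning behind Lemma~\ref{lem:singular}. First I would record the roles of the functions involved: along $\X$ the functions $\f_4$ and $k_s-\f_4$ are the boundary traces $\tr\rho(B_1)$, $\tr\rho(B_2)$, while $k_p$ is their product and $k$ is the defining polynomial of $\X\subseteq I^7$. Because boundary traces are Casimirs for the character-variety Poisson structure, $W(d\f_4)=W(dk_p)=W(dk)=0$, hence also $W\big(d(k_p-\f_4)\big)=0$; equivalently, this can be read off directly from the explicit $7\times 7$ matrix $W$ displayed above (the first column is zero, and the other two identities are a polynomial check). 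Thus the three $1$-forms $d\f_4$, $d(k_p-\f_4)$, $dk$ always lie in $\Ker W_v$.

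Next I would bring in the hypothesis $v\in\X_c^U$: there $\omega$ is a nondegenerate symplectic form on the $4$-dimensional manifold $\X_c^U$, so $W_v$ has rank exactly $4$, with $\mathrm{Image}\,W_v=T_v\X_c^U$ and $\dim\Ker W_v=3$. The four vectors $\Ham_i=W(d\p_i)$, $i\in\{12,23,13,0\}$, lie in $T_v\X_c^U$, and a relation $\sum_i a_i\Ham_i=0$ is equivalent to $\sum_i a_i\,d\p_i\in\Ker W_v$. Suppose now $\{\Ham_i\}$ is linearly dependent at $v$, so such a relation holds with $(a_i)\neq 0$. Were the seven $1$-forms $d\f_4$, $d(k_p-\f_4)$, $dk$, $d\p_{12}$, $d\p_{23}$, $d\p_{13}$, $d\p_0$ linearly independent, they would form a basis of $T_v^*(I^7)$; the first three would then span the $3$-dimensional $\Ker W_v$, forcing $\mathrm{span}\{d\p_i\}\cap\Ker W_v=\{0\}$ and hence $\sum_i a_i\,d\p_i=0$, contradicting $(a_i)\neq 0$. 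Therefore these seven $1$-forms are linearly dependent at $v$, i.e. their wedge product vanishes there.

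To finish, since $\f_4,k_p,k,\p_{12},\p_{23},\p_{13},\p_0$ all lie in $\R[K]$, the wedge $d\f_4\wedge d(k_p-\f_4)\wedge dk\wedge\bigwedge_i d\p_i$ equals $s\,\Omega$ for the polynomial $s$ of the lemma, and it vanishes at $v$ precisely when $s(v)=0$, i.e. $v\in V$. I do not expect a genuine obstacle here: the whole content is the rank-and-kernel bookkeeping for $W_v$, and the only real input is nondegeneracy of $\omega$ on the smooth locus $\X_c^U$ (this is exactly where the hypothesis $v\in\X_c^U$ is used) together with the Casimir property of the boundary traces. The explicit polynomial $s\in\R[K]$ itself is then produced by a computer-assisted determinant computation, just as in the $4$-holed sphere case of Lemma~\ref{lem:singular}.
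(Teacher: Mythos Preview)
Your proof is correct and is precisely the linear-algebra unpacking that the paper's one-line ``By definition, $V$ is the dependency locus'' is gesturing at; the paper gives no further argument. Your rank/kernel bookkeeping for $W_v$ (three Casimir differentials filling $\Ker W_v$ on the smooth locus, forcing a dependency among the seven $1$-forms) is exactly the intended content, with only the minor caveat that $k_p$ is a Casimir not because it is itself a boundary trace but because $k_p = k + \f_4(k_s-\f_4)$ and $\f_4,\,k_s,\,k$ are Casimirs---which your ``polynomial check'' covers anyway.
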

\begin{rem}
Again in complete parallel with the case of the 4-hole sphere, the choices of $d\f_4, d(k_s - \f_4), dk$ are not canonical.  They are chosen because $\f_4, k_s - \f_4$ are the boundary traces and $k$ is the defining equation of $\X$; hence, they are constant on $\X_c$.
\end{rem}
\begin{lem}\label{lem:2hole transverse}
The vector field $\Ham_{12}$ is not tangent to $V$ for almost all $c \in I^2$.
\end{lem}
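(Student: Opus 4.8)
The plan is to run the argument of Lemma~\ref{lem:transverse} (the $4$-holed sphere case) verbatim, with the $2$-holed-torus data. Recall that $\X$ is a component of $\{k=0\}\subseteq I^7$, that $V=\{s=0\}$, and that $\Ham_{12}=W(d\p_{12})$, being a Hamiltonian vector field for $W$, is tangent to every symplectic leaf $\X_c$ and hence to $\X$. I would argue by contradiction: if $\Ham_{12}$ is tangent to $V$, then (exactly as in the $4$-holed case) $\Ham_{12}$ restricts to a derivation of the coordinate ring of $V\cap\X$, and therefore $\Ham_{12}(s)\in(k,s)\subseteq\R[K]$. The goal is to refute this membership.

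First I would write $\Ham_{12}(s)=\la ds,\ W\,d\p_{12}\ra$ out explicitly as a polynomial in the generators $K$, using the displayed matrix $W$ and the formula $\p_{12}=-2+\f_1^2+\f_{12}^2-\f_1\f_{12}\f_2+\f_2^2$; here $s$ is the $7\times 7$ Jacobian determinant defined by $s\,\Omega=d\f_4\wedge d(k_s-\f_4)\wedge dk\wedge\bigwedge_i d\p_i$. Next I would compute a Gr\"obner basis of the ideal $(k,s)$ with respect to a convenient monomial order (lexicographic, or a graded order for speed) in a computer algebra system, exactly as for Lemma~\ref{lem:transverse}; then reduce $\Ham_{12}(s)$ modulo that basis and check that the normal form (residue) is nonzero. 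This yields $\Ham_{12}(s)\notin(k,s)$, contradicting the tangency assumption, so $\Ham_{12}$ fails to be tangent to $V$ at some point of $V\cap\X$.

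To promote this to almost all $c\in I^2$: if the set of $c$ for which $\Ham_{12}$ were tangent to $V\cap\X_c$ at every point had positive measure it would be Zariski dense in $I^2$, and since the fibres $\X_c$ sweep out $\X$, the polynomial $\Ham_{12}(s)$ would then vanish identically on $V\cap\X$ (and on the corresponding complex component of $\{k=s=0\}$), again contradicting $\Ham_{12}(s)\notin(k,s)$ --- exactly the final step in the proof of Lemma~\ref{lem:transverse}. Hence for almost every $c\in I^2$ there is a Zariski dense subset of $V\cap\X_c$ on which $\Ham_{12}$ is transverse to $V$, which is what the ergodicity argument (modelled on Theorem~\ref{thm:4-hole}) requires.

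The conceptual content is therefore identical to the $4$-holed sphere, and the hard part will be purely computational. In contrast with that case, where $s$ turned out to be the small polynomial $2\f_{23}^2-\c_{23}\f_{23}-2\f_{13}^2+\c_{13}\f_{13}$, here $W$ is the full $2$-holed-torus Poisson bivector with its twelve nonzero quadratic entries and $s$ is a large determinant built from the gradients of the four trace polynomials $\p_{12},\p_{23},\p_{13},\p_0$ together with $dk$ and $d(k_s-\f_4)$; consequently both $\Ham_{12}(s)$ and the Gr\"obner basis of $(k,s)$ are heavy objects, and the real work is carrying out and trusting this machine computation --- in particular choosing a term order in which Buchberger's algorithm terminates in reasonable time. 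This is precisely the obstruction flagged in the introduction as preventing the extension of Theorem~\ref{thm:main} to surfaces of higher genus or with more boundary components.
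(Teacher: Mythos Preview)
Your proposal is correct and matches the paper's argument: assume tangency, deduce $\Ham_{12}(s)\in(k,s)$, then compute a Gr\"obner basis for $(k,s)$ and verify that the residue of $\Ham_{12}(s)$ is nonzero. The paper adds one observation you omit, namely that $s$ factors as $s_1 s_2$, so one may alternatively compute Gr\"obner bases for the smaller ideals $(k,s_1)$ and $(k,s_2)$ and check the residues separately; this both lightens the machine computation you flag as the bottleneck and ensures transversality of $\Ham_{12}$ to each irreducible component of $V$, which is what the ergodicity argument in Proposition~\ref{thm:2-hole} actually uses.
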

\begin{proof}
Suppose $\Ham_{12}$ is tangent to $V$.  Then $\Ham_{12}$ is a derivation
on the ring of functions on $V \cap \X$. Hence $\Ham_{12}(s) = 0 \in  \R[K]/(k,s).$  This implies that $H_{12}(s) \in (k,s)$.

With the aid of a computer, one may compute a Gr\"obner basis for $(k,s)$ 
and show that the residue of $\Ham_{12}(s)$ is not zero.  Hence $\Ham_{12}(s) \not\in (k,s)$.  This implies that for a generic $c \in I^2$, $\Ham_{12}$ is not tangent to $s$.

It so happens that $s = s_1 s_2$ is reducible with two factors.  Hence one may compute the Gr\"obner basis $(k, s_1)$ and $(k, s_2)$ and then compute the residues in each cases.

\end{proof}

\begin{prop} \label{thm:2-hole}
The $\Gamma$-action on $\X_c^U$ is ergodic for almost every $c \in I^2$.
\end{prop}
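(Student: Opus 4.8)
The plan is to mirror the proof of Theorem~\ref{thm:4-hole}, replacing the pair $\p_{12},\p_0$ by the four functions $\p_{12},\p_{23},\p_{13},\p_0$ and Lemmas~\ref{lem:singular} and~\ref{lem:transverse} by Lemmas~\ref{lem:2hole singular} and~\ref{lem:2hole transverse}. Let $f\in\M^\Gamma$. The curves $F_iF_jF_i^{-1}F_j^{-1}$ are commutators, hence null homologous, hence separating, and $F_0$ is the image of such a curve under a Dehn twist, so is again a separating simple closed curve; thus Proposition~\ref{prop:discreteflow} applies to each of $\tau_{12},\tau_{23},\tau_{13},\tau_0$ and gives $f\in\M^{\Gg_i}$ for every $i$, hence $f\in\M^\Gg$. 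It therefore suffices to show that such an $f$ is constant almost everywhere on $\X_c^U$ for generic $c\in I^2$.

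Fix a generic $c$. Exactly as in the $4$-holed case, for almost every $c$ the set $Q:=\X_c^U\cap V$ has measure zero and divides $\X_c^U$ into finitely many connected components (this uses that $s\notin(k)$, which is implicit in Lemma~\ref{lem:2hole transverse}). The leaf $\X_c^U$ is a smooth connected $4$-dimensional symplectic manifold on which $\f_4$, $k_s-\f_4$, $k_p$, and $k$ are constant, so the identity $s\,\Omega=d\f_4\wedge d(k_p-\f_4)\wedge dk\wedge\bigwedge_i d\p_i$ shows that at each point of $\X_c^U\setminus Q$ the four differentials $d\p_{12},d\p_{23},d\p_{13},d\p_0$ span the cotangent space to $\X_c^U$. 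By Proposition~\ref{prop:Ham}, the subgroup of $\Gg$ generated by the Hamiltonian flows of $\Ham_{12},\Ham_{23},\Ham_{13},\Ham_0$ acts transitively on each connected component $A$ of $\X_c^U\setminus Q$, so $f$ is constant almost everywhere on every such $A$.

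It remains to cross the walls in $Q$. By Lemma~\ref{lem:2hole transverse}, applied to each of the two factors $s_1,s_2$ of $s$, for generic $c$ the vector field $\Ham_{12}$ is not tangent to $V$ along a Zariski dense subset of $Q$, so its flow carries points of one component of $\X_c^U\setminus Q$ into an adjacent one. Since $\X_c^U$ is smooth and connected, combining these wall crossings with the transitive flows inside the components shows that $f$ must take the same value on all of them; hence $f$ is constant almost everywhere on $\X_c^U$. This is the asserted ergodicity of the $\Gamma$-action; combined with Remark~\ref{rem:J} and the fact that $\X_c^U$ is open, dense, and of full measure in $\X_c$, it also yields Theorem~\ref{thm:main}.

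The decisive step is Lemma~\ref{lem:2hole transverse}: verifying, with the aid of a Gr\"obner basis for $(k,s_1)$ and for $(k,s_2)$, that the residues of $\Ham_{12}(s_1)$ and $\Ham_{12}(s_2)$ are nonzero. This is the main obstacle, both because the polynomials involved are large enough to require machine computation and because it is exactly this step that has no known analogue for surfaces of higher genus or with more boundary components.
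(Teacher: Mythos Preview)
Your proof is correct and follows essentially the same route as the paper's own argument: reduce $\Gamma$-invariance to $\Gg$-invariance via Proposition~\ref{prop:discreteflow}, use Lemma~\ref{lem:2hole singular} together with Proposition~\ref{prop:Ham} to get constancy on each component of $\X_c^U\setminus Q$, and then use Lemma~\ref{lem:2hole transverse} to push the flow of $\Ham_{12}$ across the walls in $Q$. Your write-up actually supplies a few justifications the paper leaves implicit (why the curves are separating so that Proposition~\ref{prop:discreteflow} applies, why the $d\p_i$ span the cotangent space off $Q$, and the treatment of the two factors $s_1,s_2$), but the architecture is identical.
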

\begin{proof}
Suppose
$f \in \M^\Gamma$.  Then $f\in \M^\Gg$.
For almost all $c \in I^2$, the set $Q=\X_c^U \cap
V$ has measure zero and divides $\X_c^U$ into a finite number of
components. Let $A \subseteq \X_c^U \setminus Q$ be a connected
component. By Lemma~\ref{lem:2hole singular} and Proposition \ref{prop:Ham},
$f$ is constant almost everywhere on $A$.

Lemma~\ref{lem:2hole transverse} implies that there is a Zariski dense
subset of $Q$ upon which $\Ham_{12}$ is not tangent to $Q$. Hence there
exists a smooth vector field (namely $\Ham_{12}$) in $\G_{12}$ that flows
across $Q$ between adjacent components.  This implies that if $v_0,
v_1 \in \X_c^U \setminus Q$, then there exists $g \in \G$ such that
$g(v_0) = v_1$.  Since $\X_c^U$ is smooth and connected, $f$ is constant almost everywhere on $\X_c^U \setminus Q$.
Since $Q$ has measure zero, the theorem follows.
\end{proof}

Theorem~\ref{thm:main} follows as $\Gamma \subseteq \Jj$ and $\X_c^U$ is open and dense in $\X_c$.

\end{document}